%
%
%


\documentclass[10pt]{article}




\usepackage{comment}
\usepackage{amssymb,latexsym,amsmath,epsfig,amsthm} 
\usepackage{empheq}
\usepackage{here}
\usepackage{tabularx}
\usepackage{array}
\newtheorem{theorem}{Theorem}[section]
\newtheorem{lemma}[theorem]{Lemma}

\newtheorem{definition}[theorem]{Definition}
\newtheorem{example}[theorem]{Example}

\begin{document}

\Large{Maximum Nim and Josephus Problem}


\vspace{1.4cm}

\large{Shoei Takahashi,  Hikaru Manabe and Ryohei Miyadera}\\ \\

\ \ \ \ \ \ \ \ \ \ \ \ \ \ \ \  \large{  Keimei Gakuin High School}


\begin{abstract}
In this study, we study the relation between Grundy numbers of a Maximum Nim and Josephus problem.
Let  $ f(x) = \left\lfloor \frac{x}{k} \right\rfloor$, where $ \left\lfloor \ \  \right\rfloor$ is a floor function and $k$ is a positive integer.
We prove that there is a simple relation with a Maximum Nim with the rule function $f$ and the Josephus problem in which every $k$-th numbers are to be removed.
\end{abstract}

\section{Introduction}
Let $\mathbb{Z}_{\ge 0}$ and $\mathbb{N}$ represent sets of non-negative integers and positive integers, respectively.
For any real number $x$, the  floor of $x$ denoted by  $ \left\lfloor x \right\rfloor$ represent 
the least integer greater than or equal to $x$.

The classic game of Nim is played using stone piles. Players can remove any number of stones from any pile during their turn, and the player who removes the last stone is considered the winner. See \cite{bouton}.
 
 Several variants of the classical Nim game exist. For the Maximum Nim, we place an upper bound $f(n)$ on the number of stones that can be removed in terms of the number $n$ of stones in the pile (see \cite{levinenim}). For other articles on Maximum Nim, see \cite{levinenim}, \cite{thaij2023b} and  \cite{integer2023}.

 In this article, we study the relation between a Maximum Nim and  the Josephus problem. 
 
 Let  $ f(x) = \left\lfloor \frac{x}{k} \right\rfloor$, where $ \left\lfloor \ \  \right\rfloor$ is a floor function and $k$ is a positive integer.
We prove that there is a simple relation with a Maximum Nim with the rule function $f$ and the Josephus problem in which every $k$-th numbers are to be removed.
 This is remarkable because the games for Nim and Josephus' problems are thought to be entirely different.

\section{Maximum Nim}
We consider maximum Nim as follows.

Suppose there is a pile of $n$ stones, and two players take turns to remove stones from the pile.
At each turn, the player is allowed to remove at least one and at most 
$f(m)$ stones if the number of stones is $m$. The player who removes the last stone or stones is the winner. 
Here, $f(m)$ represents a function whose values are non-negative integers for $m \in Z_{\ge 0}$ such that 
$0 \leq f(m) -f(m-1) \leq 1$ for any natural number $m$. We refer to $f$ as the rule function.

We briefly review some necessary concepts in combinatorial game theory (see \cite{lesson} for more details). 

We deal with impartial games with no draws, and therefore, there are only two outcome classes. \\
$(a)$ A position is called a $\mathcal{P}$-\textit{position} if it is a winning position for the previous player (the player who just moved), as long as he/she plays correctly at every stage.\\
$(b)$ A position is called an $\mathcal{N}$-\textit{position} if it is a winning position for the next player, as long as he/she plays correctly at every stage.

The Grundy number is one of the most important tools in research on combinatorial game theory, and we need ``move’’ for the definition of the Grundy number.\\

\begin{definition}\label{defofmexgrundy}
$(i)$ For any position $(x)$ in this game, the set of positions can be reached by precisely one move, denoted as \textit{move}$(x)$. \\	
$(ii)$ The \textit{minimum excluded value} $(\textit{mex})$ of a set $S$ of non-negative integers is the smallest non-negative integer that is not in S. \\
$(iii)$ Let $(x,y)$ be a position in the game. The associated \textit{Grundy number} is denoted by $\mathcal{G}(x)$ and is recursively defined as follows:
$\mathcal{G}(x) = \textit{mex}(\{\mathcal{G}(u): (u) \in move(x)\}).$
\end{definition}

For the maximum Nim of $x$ stones with rule function $f(x)$, \\
$\textit{move}(x)$ $= \{x-u:1 \leq u \leq f(x) \text{ and } u \in N \}$.

The next result demonstrates the usefulness of the Sprague-Grundy theory in impartial games.
\begin{theorem}\label{theoremoforgrundyn}
For any position $(x)$,
	$\mathcal{G}(x)=0$ if and only if $(x)$ is the $\mathcal{P}$-position.
\end{theorem}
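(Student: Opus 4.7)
The plan is to prove this standard Sprague--Grundy fact by strong induction on $x$, using the recursive definitions of $\mathcal{P}$/$\mathcal{N}$-positions and of $\mathcal{G}$ in parallel. Recall the recursive characterization: $(x)$ is a $\mathcal{P}$-position exactly when $\textit{move}(x) = \emptyset$ or every element of $\textit{move}(x)$ is an $\mathcal{N}$-position, and $(x)$ is an $\mathcal{N}$-position exactly when some element of $\textit{move}(x)$ is a $\mathcal{P}$-position. This matches the definition of $\textit{mex}$ in a very convenient way.

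For the base case $x = 0$, there are no legal moves, so $\textit{move}(0) = \emptyset$; hence $(0)$ is a $\mathcal{P}$-position, and $\mathcal{G}(0) = \textit{mex}(\emptyset) = 0$. Both sides of the biconditional hold.

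For the inductive step, fix $x \geq 1$ and assume the theorem holds for every $y < x$; note that every element of $\textit{move}(x)$ is strictly less than $x$, so the inductive hypothesis applies to each such $y$. I would then argue the two directions separately. If $(x)$ is a $\mathcal{P}$-position, then every $u \in \textit{move}(x)$ is an $\mathcal{N}$-position, so by induction $\mathcal{G}(u) \neq 0$ for every such $u$. Hence $0 \notin \{\mathcal{G}(u) : u \in \textit{move}(x)\}$, which forces $\mathcal{G}(x) = \textit{mex}(\{\mathcal{G}(u) : u \in \textit{move}(x)\}) = 0$. Conversely, if $(x)$ is an $\mathcal{N}$-position, there exists $u \in \textit{move}(x)$ that is a $\mathcal{P}$-position; by induction $\mathcal{G}(u) = 0$, so $0$ belongs to the set inside the $\textit{mex}$, and therefore $\mathcal{G}(x) \geq 1$, in particular $\mathcal{G}(x) \neq 0$. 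Contrapositives of these two implications combine to give the full ``if and only if.''

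There is no real obstacle here; the only subtlety worth flagging is the edge case $\textit{move}(x) = \emptyset$, which only occurs at $x = 0$ under the given rule function (since $f$ is nondecreasing with $f(m) - f(m-1) \in \{0,1\}$), and which must be handled as a base case so that the vacuous ``every move leads to an $\mathcal{N}$-position'' clause in the definition of $\mathcal{P}$-position lines up with $\textit{mex}(\emptyset) = 0$. Aside from this, the proof is a clean double induction bookkeeping exercise and does not require any property specific to the maximum Nim rule function $f(x) = \lfloor x/k \rfloor$ used later in the paper.
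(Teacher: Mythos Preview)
Your inductive argument is the standard and correct proof of this Sprague--Grundy fact. The paper itself does not prove the theorem at all: it simply cites \cite{lesson} (Albert, Nowakowski, Wolfe) for the proof. So there is nothing to compare approach-wise; you have supplied a self-contained argument where the paper defers to a reference.

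One small inaccuracy in a side remark: you assert that $\textit{move}(x)=\emptyset$ only at $x=0$ under the given rule function. The hypotheses on $f$ (nonnegative integer values, $0\le f(m)-f(m-1)\le 1$) do not force $f(x)>0$ for $x>0$; indeed for the specific rule $f(x)=\lfloor x/k\rfloor$ used later one has $f(x)=0$, hence $\textit{move}(x)=\emptyset$, for every $0\le x<k$. This does not harm your proof, since such positions are still $\mathcal{P}$-positions with $\mathcal{G}(x)=\textit{mex}(\emptyset)=0$, and your inductive step already covers the case $\textit{move}(x)=\emptyset$ vacuously. Just drop or correct that parenthetical claim.
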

See \cite{lesson} for the proof of this theorem.\\

\begin{lemma}\label{lemmabylevinenim}
Let $\mathcal{G}$ represent the Grundy number of the maximum Nim with the rule function $f(x)$. Then, we have the following properties:\\
$(i)$ If $f(x) = f(x-1)$, $\mathcal{G}(x) = \mathcal{G}(x-f(x)-1)$.\\
$(ii)$ If $f(x) > f(x-1)$, then  $\mathcal{G}(x) = f(x)$.\\
\end{lemma}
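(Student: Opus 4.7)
The plan is to prove both parts of the lemma simultaneously by strong induction on $x$, using an inductive invariant that is stronger than the lemma's statement. The invariant I would carry is that, for every $x \in \mathbb{Z}_{\ge 0}$, the $f(x)+1$ Grundy values $\mathcal{G}(x), \mathcal{G}(x-1), \ldots, \mathcal{G}(x-f(x))$ are pairwise distinct and form precisely the set $\{0, 1, \ldots, f(x)\}$. The bound $\mathcal{G}(x) \le f(x)$ (immediate from the mex of at most $f(x)$ values) together with this window property is exactly the structural fact the recursion needs.

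After the trivial base case $x = 0$ (where the standing convention $f(0) = 0$ gives $\mathcal{G}(0) = \mathrm{mex}(\emptyset) = 0$), the inductive step splits along the two cases of the lemma. Applying the invariant at $x-1$ gives $\{\mathcal{G}(x-1), \mathcal{G}(x-2), \ldots, \mathcal{G}(x-1-f(x-1))\} = \{0, 1, \ldots, f(x-1)\}$. In Case (ii), where $f(x) = f(x-1)+1$, this reads $\{\mathcal{G}(x-1), \ldots, \mathcal{G}(x-f(x))\} = \{0, 1, \ldots, f(x)-1\}$, whose mex is $f(x)$; so $\mathcal{G}(x) = f(x)$, and the window at $x$ becomes $\{0, 1, \ldots, f(x)\}$, restoring the invariant. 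In Case (i), where $f(x) = f(x-1)$, the same display becomes $\{\mathcal{G}(x-1), \ldots, \mathcal{G}(x-f(x)-1)\} = \{0, 1, \ldots, f(x)\}$; dropping the outermost entry $\mathcal{G}(x-f(x)-1)$ leaves a set of $f(x)$ distinct values missing exactly that one element, so its mex equals $\mathcal{G}(x-f(x)-1)$, which is part (i); the invariant is then restored because reinserting $\mathcal{G}(x)$ at the front adds back the missing element.

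The main obstacle is choosing the right strengthening of the inductive hypothesis. The raw statement of the lemma alone does not describe how the Grundy values on the sliding window of length $f(x)+1$ are arranged, yet this window structure is exactly what makes both mex computations transparent. A minor side check is that Case (i) implicitly requires $x - f(x) - 1 \ge 0$; however, $f(x) = f(x-1)$ means $f$ failed to increase at step $x$, forcing $f(x) \le x - 1$, so $\mathcal{G}(x - f(x) - 1)$ is well defined. Everything else reduces to routine bookkeeping.
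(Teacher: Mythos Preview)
Your proof is correct. The strengthened invariant---that the window $\mathcal{G}(x),\mathcal{G}(x-1),\ldots,\mathcal{G}(x-f(x))$ is a permutation of $\{0,1,\ldots,f(x)\}$---is exactly the right bookkeeping device, and your two case analyses and the side check $x-f(x)-1\ge 0$ in case~(i) are all sound.

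By way of comparison, the paper does not actually supply a proof of this lemma: it simply cites Lemma~2.1 of Levine~\cite{levinenim}. Your argument is thus strictly more self-contained than what appears here. For what it is worth, the sliding-window invariant you use is essentially the mechanism behind Levine's original proof as well, so your approach is not a genuinely different route but rather a clean write-up of the standard one.
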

These properties are proved in Lemma 2.1 of \cite{levinenim}.

\section{Maximum Nim with  the rule function $f(x)=\left\lfloor \frac{x}{k} \right\rfloor$ for $k \in \mathbb{N}$ and Josephus Problem. }
Let $k \in \mathbb{N}$ such that $k \geq 2$.
Throughout the remainder of this article we assume that  $\mathcal{G}$ is the Grundy number of Maximum Nim with  the rule function $f(x)=\left\lfloor \frac{x}{k} \right\rfloor$.
We study Josephus problem and its relation to Maximum Nim treated in previous sections.
\begin{lemma}\label{lemmaformax}
(i) When $x$ is a multiple of $k$,  $\mathcal{G}(x) =  \frac{x}{k} $.\\
(ii) When $x$ is not a multiple of $k$, $\mathcal{G}(x) = \mathcal{G}(\left\lfloor \frac{(k-1)x}{k} \right\rfloor)$.\\
\end{lemma}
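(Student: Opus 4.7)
The plan is to deduce both parts directly from Lemma \ref{lemmabylevinenim} after a short case analysis on the remainder of $x$ modulo $k$. For any $x$, write $x = qk+r$ with $0 \le r \le k-1$, so that $f(x) = \lfloor x/k \rfloor = q$. The key preliminary observation is the relation between $f(x)$ and $f(x-1)$: if $r \ge 1$, then $x-1 = qk + (r-1)$ with $0 \le r-1 \le k-2$, giving $f(x-1) = q = f(x)$; if $r = 0$ (and $x \ge 1$), then $x - 1 = (q-1)k + (k-1)$, giving $f(x-1) = q - 1 < f(x)$. Thus $f$ increases precisely at multiples of $k$.

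For part (i), assume $x = qk$ is a multiple of $k$ (with $x \ge 1$, the case $x = 0$ being trivial since $\mathcal{G}(0) = 0 = 0/k$). By the observation above, $f(x) > f(x-1)$, so Lemma \ref{lemmabylevinenim}(ii) applies and yields $\mathcal{G}(x) = f(x) = q = x/k$, as required.

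For part (ii), assume $x = qk + r$ with $1 \le r \le k-1$. Then $f(x) = f(x-1)$, so Lemma \ref{lemmabylevinenim}(i) gives $\mathcal{G}(x) = \mathcal{G}(x - f(x) - 1)$. It remains to identify $x - f(x) - 1$ with $\lfloor (k-1)x/k \rfloor$. Writing $(k-1)x/k = x - x/k$ and using $x/k = q + r/k$ with $0 < r/k < 1$, we obtain
\[
\left\lfloor \frac{(k-1)x}{k} \right\rfloor = \left\lfloor x - \frac{x}{k} \right\rfloor = x - q - 1 = x - f(x) - 1,
\]
which completes the proof.

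I do not foresee a genuine obstacle here; the lemma is essentially a direct translation of the general recurrence in Lemma \ref{lemmabylevinenim} to the specific rule function $f(x) = \lfloor x/k \rfloor$. The only step requiring care is the floor identity $\lfloor (k-1)x/k \rfloor = x - \lfloor x/k \rfloor - 1$ in the non-multiple case, which must handle the fact that $r/k$ lies strictly between $0$ and $1$ (so its negation floors to $-1$); the edge cases $x = 0$ and very small $x$ should be checked separately to ensure the recurrence is well-defined.
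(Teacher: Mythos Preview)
Your proof is correct and follows essentially the same route as the paper's: both deduce parts (i) and (ii) directly from Lemma~\ref{lemmabylevinenim} after observing that $f(x)>f(x-1)$ precisely when $k\mid x$, and both identify $x-f(x)-1$ with $\left\lfloor \tfrac{(k-1)x}{k}\right\rfloor$ in the non-multiple case. You simply supply more detail in the case analysis and the floor computation than the paper does.
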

\begin{proof}
Since $f(x) > f(x-1)$ for $x$ that is a multiple of $k$ and 
\begin{equation}
x-f(x) -1 = x-\left\lfloor \frac{x}{k} \right\rfloor -1 = \left\lfloor \frac{(k-1)x}{k} \right\rfloor,
\end{equation}
(i) and (ii) of this Lemma is deprived from (i) and (ii) of Lemma \ref{lemmabylevinenim}.
\end{proof}

\begin{definition}\label{defofjj}
We have a finite sequence of positive integers $1,2,3$
$, \cdots, n-1, n$ arranged in a circle, and we start to remove every $k$-th number until only one remains.
This is a well-known Josephus problem.
For $m$ such that $1 \leq m \leq n$ and $0 \leq i \leq n-1$, we define
\begin{equation}
JJ_k(n,m)=n-i
\end{equation}
when $m$ is the $i$-th number to be removed.
\end{definition}

\begin{example}
 Let $n=10$ and $k=3$. We start with numbers $\{1,2,3,4,5,6,7,8,9,10\}$, and remove every $k$-th number.
 In the first round, we remove $3,6,9$, and in the second round,
 we remove $2,7$. In the third round, we remove $1,8$, and in the fourth round,
 we remove $5$. In the fifth round, we do not remove any, and in the sixth round we remove $10$. The remaining number is $4$. Therefore, the numbers to be removed are $\{3, 6, 9, 2, 7, 1, 8, 5, 10\}$ in this order.
 Therefore, by Definition \ref{defofjj}, 
$JJ_3(10,3)=10-1=9$, $JJ_3(10,6)=10-2=8, \dots, $ $JJ_3(10,5)=10-8=2,$ $JJ_3(10,10)=10-9=1$.

In Theorem \ref{grundytheoremjj}, we use the word "the first stage" and "the second stage".
In this example, when we remove $3,6,9$, we denote this as the end of the first stage. Then,
we begin with the remaining number $10,1,2,4,5,7,8$, and we denote this as the second stage.
\end{example}

\begin{figure}[H]
\begin{tabular}{cc}
\begin{minipage}[t]{0.4\textwidth}
\begin{center}
\includegraphics[height=5cm]{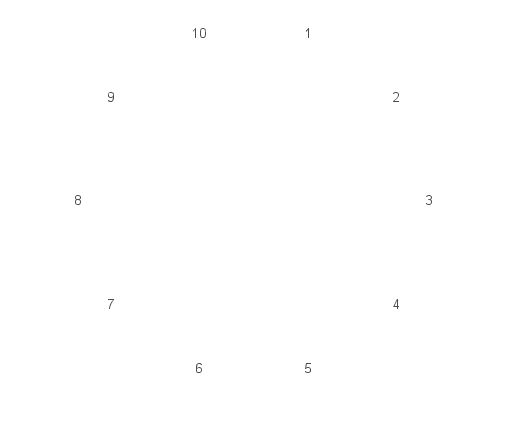}
\caption{the start}
\label{jose0}
\end{center}
\end{minipage}
\begin{minipage}[t]{0.4\textwidth}
\begin{center}
\includegraphics[height=5cm]{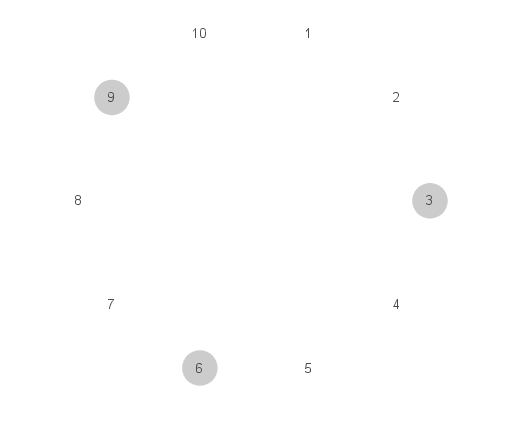}
\caption{the first Round}
\label{jose1}
\end{center}
\end{minipage}
\end{tabular}
\end{figure}

\begin{figure}[H]
\begin{tabular}{cc}
\begin{minipage}[t]{0.4\textwidth}
\begin{center}
\includegraphics[height=5cm]{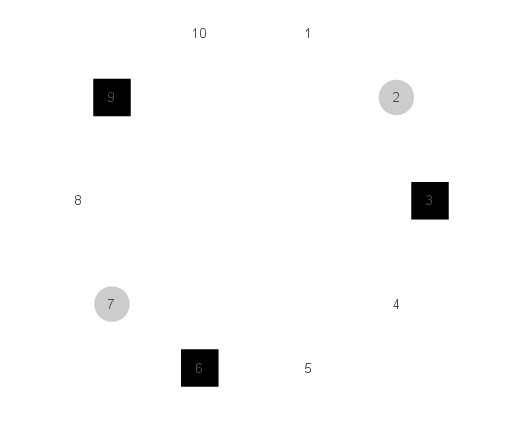}
\caption{the second Round}
\label{jose2}
\end{center}
\end{minipage}
\begin{minipage}[t]{0.4\textwidth}
\begin{center}
\includegraphics[height=5cm]{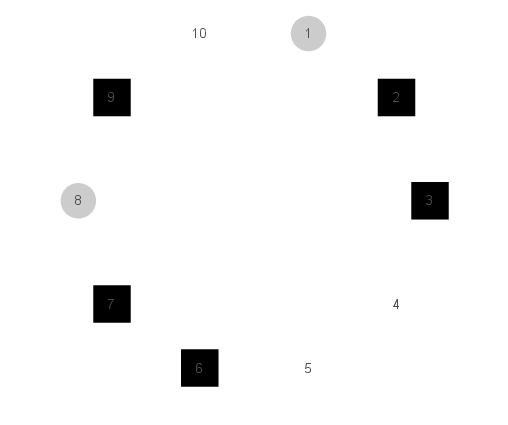}
\caption{the third Round}
\label{jose3}
\end{center}
\end{minipage}
\end{tabular}
\end{figure}

\begin{figure}[H]
\begin{tabular}{cc}
\begin{minipage}[t]{0.4\textwidth}
\begin{center}
\includegraphics[height=5cm]{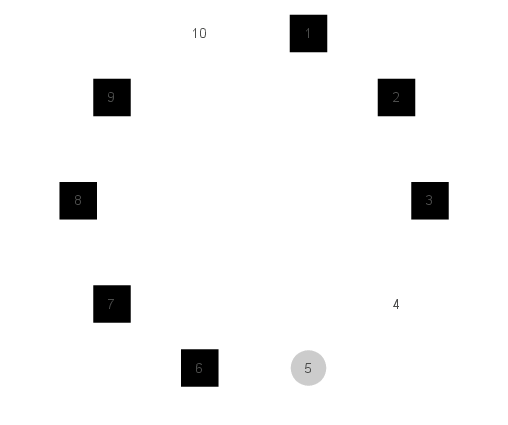}
\caption{the fourth Round}
\label{jose4}
\end{center}
\end{minipage}
\begin{minipage}[t]{0.4\textwidth}
\begin{center}
\includegraphics[height=5cm]{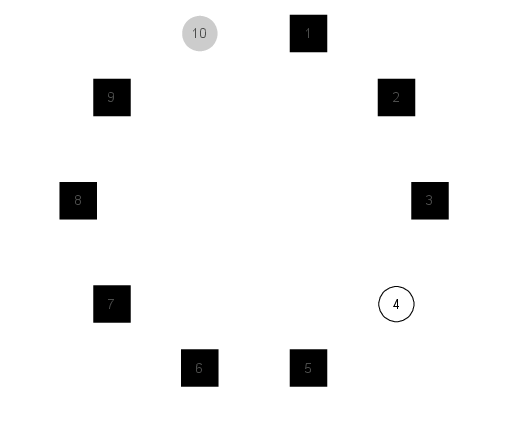}
\caption{the sixth Round}
\label{jose6}
\end{center}
\end{minipage}
\end{tabular}
\end{figure}

\begin{theorem}\label{grundytheoremjj}
For any $m \in \mathbb{N}$ such that $1 \leq m \leq n$,
\begin{equation}
JJ_k(n,m)=\mathcal{G}(nk-m) \label{theoremforgj}
\end{equation}
\end{theorem}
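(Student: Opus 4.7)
My plan is to proceed by strong induction on $n$. For the base case $n = 1$, the lone element is never removed, giving $JJ_k(1, 1) = 0$, while $\mathcal{G}(k-1) = 0$ because $f(k-1) = 0$ leaves no legal moves from position $k-1$. For the main inductive step with $n \geq k$, I would exploit the fact that the first ``round'' of Josephus removes precisely $k, 2k, \ldots, Qk$ (where $Q := \lfloor n/k \rfloor$) in this order, after which the $n' := n - Q$ survivors form a fresh Josephus game. When $m$ is a multiple of $k$, say $m = jk$ with $1 \leq j \leq Q$, this first round gives $JJ_k(n, m) = n - j$, and Lemma~\ref{lemmaformax}(i) yields $\mathcal{G}(nk - m) = \mathcal{G}((n-j)k) = n - j$ directly, with no appeal to the induction hypothesis.

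For $m$ not a multiple of $k$, writing $n = Qk + R$ with $0 \leq R < k$, I would introduce the relabelling $\phi$ of the survivors read cyclically starting from just after $Qk$, namely $\phi(Qk + s) = s$ for $1 \leq s \leq R$ and $\phi(jk + r) = R + j(k-1) + r$ for $0 \leq j \leq Q - 1$, $1 \leq r \leq k - 1$. Under $\phi$ the continuation of the game is literally a standard Josephus on $\{1, \ldots, n'\}$, so that $JJ_k(n, m) = JJ_k(n', \phi(m))$. Two short computations then supply the matching Grundy identity: when $m = Qk + s$ one has $nk - m = n'k - \phi(m)$ outright, and when $m = jk + r$ one has $\lfloor (k-1)(nk - m)/k \rfloor = n'k - \phi(m)$, the latter combined with Lemma~\ref{lemmaformax}(ii) giving $\mathcal{G}(nk - m) = \mathcal{G}(n'k - \phi(m))$. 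Since $n' < n$ whenever $Q \geq 1$, the induction hypothesis closes the step.

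The main obstacle I anticipate is the range $1 \leq n < k$, where $Q = 0$ and the round reduction stalls. For these $n$ I would run a supplementary induction that shrinks $n$ by one using the actual first removal $p = ((k-1) \bmod n) + 1$. The key algebraic fact is that iterating Lemma~\ref{lemmaformax}(ii) exactly $q = \lfloor (k-1)/n \rfloor$ times, with each step subtracting $n$ from its argument, carries $\mathcal{G}(nk - p)$ to $\mathcal{G}((n-1)k) = n - 1$, matching $JJ_k(n, p) = n - 1$; an analogous iterated application, together with the one-removal relabelling (which depends on whether $m$ precedes or follows $p$ in cyclic order), establishes $\mathcal{G}(nk - m) = \mathcal{G}((n-1)k - m')$ for $m \neq p$. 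The most delicate point is verifying that no intermediate value in these iterations is a multiple of $k$ before the final one, so that Lemma~\ref{lemmaformax}(ii) may legitimately be applied at every step.
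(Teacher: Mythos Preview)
Your proposal is correct and follows essentially the same inductive strategy as the paper: peel off the first round of removals (the multiples of $k$), match those to Lemma~\ref{lemmaformax}(i), relabel the $n-\lfloor n/k\rfloor$ survivors, reduce the remaining Grundy values via Lemma~\ref{lemmaformax}(ii) to a smaller Josephus instance, and invoke the induction hypothesis. Your write-up is in fact more complete than the paper's, which treats only the residue class $n\equiv 1\pmod k$ in detail and never addresses the range $2\le n<k$ where $\lfloor n/k\rfloor=0$ and the round reduction stalls; your supplementary single-removal argument with $q=\lfloor(k-1)/n\rfloor$ iterations of Lemma~\ref{lemmaformax}(ii) (each subtracting $n$, landing exactly on $(n-1)k$) cleanly fills that gap.
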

\begin{proof}
We prove by mathematical induction, and assume that 
\begin{equation}
JJ_k(n^{\prime},m)=\mathcal{G}(n^{\prime}k-m) \label{theoremforgj2}
\end{equation}
for $n^{\prime}$ such that $n^{\prime} < n$.
We compare $n$ positive integers 
\begin{equation}
\{1,2,3,\dots, k, \dots , n \}\label{josen}
\end{equation}
and $n$  Grundy numbers 
\begin{equation}
\{\mathcal{G}(nk-1), \mathcal{G}(nk-2), \mathcal{G}(nk-3), \dots, \mathcal{G}(nk-n) \}.\label{grundyn}
\end{equation}

We arrange  $n$ numbers of (\ref{josen}) in a circle and begin to remove every $k$-th number. At the same time, we calculate the values of every $k$-th Grundy number as the following.

We start the removing process of Josephus Problem with the numbers in (\ref{josen}).
First, we choose $k$ in (\ref{josen}), and remove it. At the same time, we choose the $k$-th number 
in (\ref{grundyn}) that is $\mathcal{G}(nk-k)$. By (i) of  Lemma \ref{lemmaformax}, 
$\mathcal{G}(nk-k)=n-1$. This is the first number to be removed in this Josephus Problem.
By Definition \ref{defofjj}, 
\begin{equation}
JJ_k(n,k)=n-1=\mathcal{G}(nk-k).
\end{equation}
Therefore, Equation (\ref{theoremforgj}) is valid for $m=k$.

Second, we choose the $2k$ in (\ref{josen}) and remove it. At the same time, we choose $2k$-th number in (\ref{grundyn}) that is $\mathcal{G}(nk-2k)$. By (i) of  Lemma \ref{lemmaformax}, 
$\mathcal{G}(nk-2k)=n-2$. 
By Definition \ref{defofjj}, 
\begin{equation}
JJ_k(n,2k)=n-2=\mathcal{G}(nk-2k).
\end{equation}
Therefore, Equation (\ref{theoremforgj}) is valid for $m=2k$.
We continue this until the end of the first round.
For $n$, there exist $t,u \in \mathbb{Z}_{\ge 0}$ such that $0 \leq u \leq k-1$ and  
\begin{equation}
n=kt+u. \label{conditionforlastn}
\end{equation}

We prove the case that $u=1$ as other cases can be proved similarly.
Then, by (\ref{conditionforlastn}), we have $n-1=kt$. Therefore,
in the Josephus Problem of (\ref{josen}), $n-1$ is $t$-th number to be removed
in the first round, and this is the last number to be removed in the first round.

Correspondingly, the $t$-th Grundy number in (\ref{grundyn}) is 
$\mathcal{G}(nk-(n-1))=\mathcal{G}(nk-kt)=n-t$.
By Definition \ref{defofjj}, we have 
\begin{equation}
JJ_k(n,n-1)=n-t=\mathcal{G}(nk-(n-1)).
\end{equation}
Therefore, Equation (\ref{theoremforgj}) is valid for $m=n-1$.

Next, we start removing the process with the last number in 
\begin{equation}
\{1,2,3,\dots,k-1, k+1, \dots ,2k-1, 2k+1,  \dots ,n-2, n \}.\nonumber
\end{equation}
Then, we start to remove numbers with 
\begin{equation}
\{n, 1,2,3,\dots,k-1, k+1, \dots ,2k-1, 2k+1,  \dots ,n-2 \}\label{josen2}
\end{equation}
as the second stage of the Josephus problem.

Then, we start to remove the remaining Grundy numbers 
\begin{align}
& \mathcal{G}(nk-n). \mathcal{G}(nk-1),\mathcal{G}(nk-2),  \dots, \mathcal{G}(nk-k+1), \nonumber \\
& \mathcal{G}(nk-k-1),\dots, \mathcal{G}(nk-2k+1),\mathcal{G}(nk-2k-1)  \nonumber \\
& , \dots, \mathcal{G}(nk-n+2)\label{remaingrundy}
\end{align}
as the second stage of the removing Grundy numbers.

By (ii) of Lemma \ref{lemmaformax}, 
\begin{align}
\mathcal{G}(nk-n)& = \mathcal{G}((n-t)k-1),\nonumber \\
\mathcal{G}(nk-1)& =\mathcal{G}(\lfloor \frac{nk(k-1)-(k-1)}{k}\rfloor)\nonumber \\
& = \mathcal{G}(n(k-1)-1)\nonumber \\
& = \mathcal{G}((n-t)k-2),\nonumber \\
\mathcal{G}(nk-2)& =\mathcal{G}(\lfloor \frac{nk(k-1)-2(k-1)}{k}\rfloor)\nonumber \\
& = \mathcal{G}(n(k-1)-2)\nonumber \\
& = \mathcal{G}((n-t)k-3),\nonumber \\
\vdots \nonumber \\
\mathcal{G}(nk-(k-1))& =\mathcal{G}(\lfloor \frac{(k-1)(nk-(k-1))}{k}\rfloor)\nonumber \\
& =\mathcal{G}(\lfloor \frac{(k-1)nk  -(k-1)k +k-1}{k}\rfloor)\nonumber \\
& = \mathcal{G}((k-1)n-k+1)  \nonumber \\
& = \mathcal{G}((n-t)k-k),  \nonumber \\
\mathcal{G}(nk-(k+1))& =\mathcal{G}(\lfloor \frac{(k-1)(nk-(k+1))}{k}\rfloor)\nonumber \\
& =\mathcal{G}(\lfloor \frac{(k-1)nk  -(k-1)k -k+1}{k}\rfloor)\nonumber \\
& = \mathcal{G}((k-1)n-k)  \nonumber \\
& = \mathcal{G}((n-t)k-k-1)  \nonumber \\
&  \vdots  \nonumber \\
\mathcal{G}(nk-(n-2))& =\mathcal{G}(\lfloor \frac{nk(k-1)-(n-2)(k-1)}{k}\rfloor)\nonumber \\
& = \mathcal{G}(n(k-1)-n+t+1)\nonumber \\
& = \mathcal{G}((n-t)k-n+t).\nonumber 
\end{align}
By the above, we have the following $n-t$ Grundy numbers.
\begin{equation}
\mathcal{G}((n-t)k-1), \mathcal{G}((n-t)k-2), \dots, \mathcal{G}((n-t)k-n+t).\label{secondroundjo}
\end{equation}

Then, we begin to remove numbers of (\ref{secondroundjo}) in the second stage of Josephus problem. Here we use the hypothesis of mathematical induction, and 
by (\ref{theoremforgj2}), 
if 
\begin{equation}
\mathcal{G}((n-t)k-1)=i,
\end{equation}
then this first number in (\ref{secondroundjo}) is the $(n-t)-i$-th removed number in this second stage of Josephus Problem. In the first stage of the Josephus Problem, we have already removed $t$ numbers, and hence $\mathcal{G}((n-t)k-1)$ is $n-i$-th removed Grundy number when we count the order from the start of the first round.
Then, we have $\mathcal{G}((nk-n)=\mathcal{G}((n-t)k-1)=i$ and $\mathcal{G}((n-t)k-1)$ is $n-i$-th removed number. Therefore, we have
\begin{equation}
JJ_k(n,n)=i = \mathcal{G}(nk-n). \nonumber
\end{equation}
Therefore, (\ref{theoremforgj}) is valid for $m=n$.

Next, we prove (\ref{theoremforgj}) for arbitrary Grundy number 
in (\ref{secondroundjo}).

For $s,i \in \mathbb{Z}_{\ge 0}$ with $1 \leq i < k-1$, there exists $u \in \mathbb{N}$ such that 
\begin{align}
\mathcal{G}(nk-(sk+i)) & =\mathcal{G}(\lfloor \frac{nk(k-1)-sk(k-1)-i(k-1)}{k}\rfloor)\nonumber \\
& = \mathcal{G}((n-t)k-sk-i+s-1+\frac{i}{k})  \nonumber \\
& = \mathcal{G}((n-t)k-sk-i+s-1) = \nonumber \\
& = \mathcal{G}((n-t)k-(s(k-1)+i+1)) = u.  \label{grundyequ}
\end{align}
Here 
\begin{equation}
 \mathcal{G}(kn-(sk+i))=\mathcal{G}((n-t)k-(s(k-1)+i+1))
\end{equation}
is the $s(k-1)+i+1$-th number in (\ref{secondroundjo}).
Here we use the hypothesis of mathematical induction, and by (\ref{theoremforgj2}) and (\ref{grundyequ}), this 
$s(k-1)+i+1$-th number in (\ref{secondroundjo}) will be the 
$(n-t)-u$-th removed number in (\ref{secondroundjo}).
Since we have already removed $t$ numbers in the first stage of the Josephus problem,
$\mathcal{G}(kn-(sk+i))$ is the $(n-t)-u+t=n-u$-th removed number from the start of the Josephus problem.
Therefore, we have
\begin{equation}
 \mathcal{G}(kn-(sk+i))=u
\end{equation}
and
$\mathcal{G}(kn-(sk+1))$ is the $n-u$-th number,
and 
(\ref{theoremforgj}) is valid for $sk+1$.
\end{proof}

\bibliographystyle{amsplain}


\end{document}